\newtheorem{thm}{Theorem}[section]
\newtheorem{lem}[thm]{Lemma}
\newtheorem{defi}[thm]{Definition}
\newtheorem{remark}[thm]{Remark}
\newtheorem{example}[thm]{Example}
\newtheorem{pb}[thm]{Problem}
\numberwithin{equation}{section}
\newcommand{\nat}{{\mathbb N}}
\newcommand{\B}{{\mathcal B}}
\newcommand{\M}{{\mathcal M}}
\newcommand{\U}{{\mathcal U}}
\newcommand{\e}{\varepsilon}
\newcommand{\8}{\infty}
\newcommand{\ep}{\varepsilon}
\newcommand{\be}{\begin{eqnarray*}}
\newcommand{\ee}{\end{eqnarray*}}
\newcommand{\beq}{\begin{equation}}
\newcommand{\eeq}{\end{equation}}
\newcommand{\beqn}{\begin{equation*}}
\newcommand{\eeqn}{\end{equation*}}
\begin{document}

\title{Noncommutative Ergodic Theorems for Connected Amenable Groups}

\thanks{{\it 2010 Mathematics Subject Classification:} Primary 46L53,
46L55;
Secondary 47A35, 37A99}
\thanks{{\it Key words and phrases:} Noncommutative $L_p$-spaces, noncommutative dynamical systems, connected amenable group, locally compact group, maximal ergodic inequalities for $\mathbb{R}^d$ group, individual ergodic theorem}

\author{Mu Sun}

\maketitle

\begin{abstract}
This paper is devoted to the study of noncommutative ergodic theorems for connected amenable locally compact groups. For a dynamical system $(\M,\tau,G,\sigma)$, where $(\M,\tau)$ is a von Neumann algebra with a normal faithful finite trace and $(G,\sigma)$ is a connected amenable locally compact group with a well defined representation on $\M$, we try to find the largest noncommutative function spaces constructed from $\M$ on which the individual ergodic theorems hold. By using the Emerson-Greenleaf's structure theorem, we transfer the key question to proving the ergodic theorems for $\mathbb{R}^d$ group actions. Splitting the $\mathbb{R}^d$ actions problem in two cases according to different multi-parameter convergence types---cube convergence and unrestricted convergence, we can give maximal ergodic inequalities on $L_1(\M)$ and on noncommutative Orlicz space $L_1\log^{2(d-1)}L(\M)$, each of which is deduced from the result already known in discrete case. Finally we give the individual ergodic theorems for $G$ acting on $L_1(\M)$ and on $L_1\log^{2(d-1)}L(\M)$, where the ergodic averages are taken along certain sequences of measurable subsets of $G$.
\end{abstract}

\section{Introduction}\label{intro}
The study of ergodic theorems is an old branch of dynamical system theory which was started in 1931 by von Neumann and Birkhoff, having its origins in statistical mechanics. While new applications to mathematical physics continued to come in, the theory soon earned its own rights as an important chapter in functional analysis and probability. In the classical situation the stationarity is described by a measure preserving transformation $T$, and one considers averages taken along a sequence $f,~f\circ T,~ f\circ T^2,\, \dots$ for integrable $f.$ This corresponds to the probabilistic concept for stationarity. The modes of convergence under consideration mostly are norm convergence for``mean'' ergodic theorems, and convergegence almost everywhere for ``individual'' (or pointwise) ergodic theorems. More generally, we study semigroups (or groups) $\{T_g, g\in G\}$ of operators and limits of averages of $T_g f$ over subsets $I_n \subset G.$ In particular, group-theoretic considerations seem to be inherent in this analysis subject, and usually more interesting according to the abundance of structures for different group types. Given a locally compact group G with left Haar measure $|\cdot|$, a sequence of measurable subsets $\{F_n\}_{n=1}^\8$ in $G$ is called a F{\o}lner sequence (similarly for F{\o}lner net), if $0<|F_n|<\8$ for each $n$ and $$\forall g \,\in \, G,\quad \lim_{n\rightarrow \8} \frac{|gF_n \bigtriangleup F_n|}{|F_n|}=0.$$ A locally compact group is called amenable if it has a F{\o}lner sequence. There are many other equivalent ways to define amenability, however in the context of ergodic theory, the above definition is the most convenient. One of the ultimate aims in this direction is to establish the ergodic theorems for amenable group which is treated as a substitute for Abelian group. Using the group structure theorem, Emerson and Greenleaf gave a result for connected amenable locally compact group in \cite{EG1974}. It is until 2001 that Lindenstrauss's paper in {\it Inventiones} \cite{Lin2001} gave the pointwise ergodic theorem for the general amenable group and the averages is taken along certain tempered F{\o}lner sequence. The research on the existence and choice of the convergence sequence in the group is also quite an interesting and delicate subject.

While the observable quantities in classical systems can be described by real functions on probability space, they can be given for quantum systems by the operators in noncommutative probability space. In this case we have the so called noncommutative ergodic theory which has been developed since the very beginning of the theory of ``rings of operators''. But only mean ergodic theorems have been obtained at the early stage. It is until 1976 that a substitute of ``a.e. convergence'' was first treated in Lance's work \cite{la-erg}, which is called almost uniform convergence. This conception unsealed the study of individual ergodic theorems for noncommutative case. In the same period, Conze and Dang-Ngoc was inspired by Lance's work and generalized the pointwise ergodic result in \cite{EG1974} to the actions on von Neumann algebra, which is now the $p=\8$ case of noncommutative $L_p$ spaces.

In this paper, we denote $\mathcal{M}$ as a von Neumann algebra equipped with a normal faithful finite
trace $\tau$. In this case, we will often assume that $\tau$ is normalized, i.e., $\tau(1)=1,$ thus we call $(\M,\tau)$ a noncommutative probability space. Let $L_p(\M)(1\le p\le\8)$ be the associated noncommutative $L_p$-space and $L_p\log^r L(\M)(r>0)$ the associated noncommutative Orlicz space. The set of all projections in $\M$ is denoted as $P(\M).$ A multi-parameter net $\{x_{\alpha_1,\dots,\alpha_d}\}$ is said to converge bilaterally almost uniformly (resp. almost uniformly) to ${x}$, if for any $\e>0$, there exists $e\in P(\M)$, such that
$$ \tau(e^\bot)\le \e \text{ and } \{e(x_{\alpha_1,\dots,\alpha_d}-x)e\} ~(\text{resp. }\{(x_{\alpha_1,\dots,\alpha_d}-x)e\})$$
converges to $0$ in $\M$. Usually we denote it as b.a.u. (resp. a.u.) convergence.

Let $T:\mathcal{M} \rightarrow  \mathcal{M}$ be a linear map, it is called a kernel if it is a positive contraction and satisfy $\tau \circ T \leq \tau $ for all $x \in L_1(\mathcal{M}) \cap \mathcal{M}_+$. We know the domain of kernels can naturally extend to $L_p(\mathcal{M})$ for any $1\le p<\8$ (c.f. Lemma 1.1 \cite{JX2007}), and we denote $\U$ as the set of all kernels.

Now given a locally compact semigroup $G$, we define a homomorphism $\sigma:G \to \U$ where correspondingly $g \mapsto T_g$ as $G$-actions on $L_p(\mathcal{M})$, and the mapping $g \to T_g(x)$ is strongly continuous, for any $x \in L_p(\mathcal{M})$, $1\le p<\8$. So putting together $(\mathcal{M},\tau, G,\sigma)$ we call it a noncommutative dynamical system, and we
we denote by $F_\sigma$ the projection from $L_p(\mathcal{M})$ onto the invariant subspace of $G$-actions. Ergodic averages along increasing measurable net $\{V_\alpha\}$ is defined as $$M_{V_\alpha}(x)=\frac{1}{|V_\alpha|}\int_{V_\alpha}T_g(x)\,dg.$$

Our aim is to further generalize the noncommutative individual ergodic theorem for the system $(\M,\tau, G, \sigma)$ if $G$ is a connected amenable locally compact group. The main step of our method is also inherited from \cite{EG1974} that we decompose the connected amenable Lie group into the product of one compact group and $\mathbb{R}^d$ group. However, it is usually difficult to use iteration to obtain the result for $p \sim 1$ case for multi-parameter $\mathbb{R}$ actions.

To describe the situation, we here introduce two types of convergence for multi-parameter sequence $\{a_{k_1,\dots,k_d}\}_{(k_1,\dots,k_d)\in \mathbb{N}^d}$ (similar for net) in any Banach space $\B$. The first type is called cube convergence, if the sequence $ \{a_{n,\dots,n}\}_{n\in\mathbb{N}}$ converges in $\B$ when $n$ tends to $\8.$ The second type is called unrestricted convergence, if $\{a_{k_1,\dots,k_d}\}_{(k_1,\dots,k_d)\in \mathbb{N}^d}$ converges in $\B$ when $k_1,\dots,k_d$ tend to $\8$ arbitrarily. Thus when we consider the ergodic averages for $\mathbb{R}^d$ actions, we have separate maximal ergodic inequalities according to different types of convergence, for which we refer to section 3. We emphasize here that for the more general case of unrestricted convergence, it is kind of a breakthrough since there is no weak type $(1,1)$ inequality for multi-parameter noncommutative dynamical system. It is only recently in my joint work with Hong \cite{HS2016} the maximal inequality for the case of $\mathbb{Z}^d$ group acting on noncommutative Orlicz space $L_1\log^{2(d-1)}L(\M)$ is obtained, in the light of which we can give a result in continuous case.

It is in section 2 we give some more detailed introduction of noncommutative probability space, and in section 4 we present the main result of this paper. Since it is obvious that cube convergence is just a special case of unrestricted convergence, the sequences of measurable subsets of $G$ taken along for keeping cube convergence of  the ergodic averages should be included in unrestricted convergence case. This idea is specified through the narrative and  proof Theorem \ref{th:pointwisegodic}.

\section{Preliminaries}\label{preli}

We use standard notions for the theory of noncommutative $L_p$ spaces. Our main references are \cite{PX2003} and \cite{Xu07}.
Let $\mathcal{M}$ be a von Neumann algebra equipped with a normal faithful finite trace $\tau$. Let $L_0(\M)$ be the spaces of measurable operators associated to $(\M,\tau)$. For a measurable operator $x$, its generalized singular number is defined as
$$\mu_t(x) = \inf \{ \lambda > 0 : \tau \big( \mathds{1}_{(\lambda,\8)}(|x|)\big)\le t\}, \ \ ~ t>0.$$
The trace $\tau$ can be extended to the positive cone $L_0^+(\M)$ of $L_0(\M)$, still denoted by $\tau$,
$$\tau(x) = \int_0^\8 \mu_t(x) dt, \ \ ~ x\in L_0^+(\M).$$
Given $0 < p < \8,$ let
$$L_p(\M) = \{x \in L_0(\M) : \tau (|x|^p) < \8\}$$
and for $x \in L_p(\M),$
$$\|x\|_p = \big(\tau (|x|^p)\big)^{\frac1p} = \big(\int_0^\8 (\mu_t(x))^p dt \big)^{\frac1p}.$$
Then $(L_p(\M),\|\cdot\|_p)$ is a Banach space (or quasi-Banach space when $p<1$). This is the noncommutative $L_p$ space associated with $(\M,\tau)$, denoted by $L_p(\M,\tau)$ or simply by $L_p(\M)$.  As usual, we set $L_\infty(\M,\tau)=\M$ equipped with the operator norm.

The noncommutative Orlicz spaces are defined in a similar way as commutative ones. Given an Orlicz function $\Phi$, the Orlicz
space $L_\Phi(\M)$ is defined as the set of all measurable operators $x$ such
that $\Phi( \frac{|x|}{\lambda} ) \in L_1(\M)$ for some $ \lambda>0$. Equipped with the norm
$$ \|x\|_\Phi = \inf\left\{ \lambda>0: \tau \left[ \Phi\left( \frac{|x|}{\lambda} \right) \right] \le 1\right\},$$
$L_\Phi(\M)$ is a Banach space.
When $\Phi(t)=t^p$ with $1\leq p<\infty$, the space $L_\Phi(\M)$ coincides with $L_p(\M)$. If $\Phi(t)= t^p(1+ \log^+t)^r$ for $ 1\le p <\8 $ and $r>0$, we get the space $L_p\log^r L(\M)$. From the definition, it is easy to check that whenever $(\M,\tau)$ is a probability space, we have
$$L_q(\M)\subset L_p\log^r L(\M)\subset L_s(\M)$$
for $q>p\geq s\geq1$.

\bigskip

The spaces $L_p(\M; \ell_\8)$ and $L_p(\M; \ell_\8^c)$ play an important role in the formulation of noncommutative maximal inequalities. A sequence
$ \{x_n\}_{n\ge 0} \subset L_p(\M)$ belongs to $L_p(\M; \ell_\8)$ if and only if it can be factored
as $x_n = a y_n b$ with $a, b \in L_{2p}(\M)$ and a bounded sequence $\{y_n\} \subset L_\8(\M)$. We
then define
$$\|\{x_n\}_n\|_{L_p(\ell_\8)}=\inf_{x_n = a y_n b}\big\{\|a\|_{2p}\,
 \sup_{n\ge0}\|y_n\|_\8\,\|b\|_{2p}\big\} .$$
Following \cite{JX2007}, this norm is symbolically denoted by $\|{\sup_n}^+ x_n\|_p$. It is shown in \cite{JX2007} that a positive
sequence $\{x_n\}_n$ belongs to $L_p(\M; \ell_\8)$ if and only if there exists
$a \in L^+_p (\M)$ such that $x_n \le a$ for all $n \ge 0$. Moreover

$$\|{\sup_n}^+ x_n\|_p = \inf\big\{\|a\|_{p}\;:\; a\in L^+_p(\M)
 \;\mbox{s.t.}\; x_n\le a,\ \forall\;n\ge0\big\}.$$
Here and in the rest of the paper, $L^+_p (\M)$ denotes the positive cone of $L_p(\M).$ The space
$L_p(\M; \ell_\8^c)$ is defined to be the set of sequences $\{x_n\}_{n\ge 0}$ with $\{|x_n|^2\}_{n\ge 0}$ belonging to $L_{p/2}(\M; \ell_\8)$ equipped with (quasi) norm
$$\|\{x_n\}_n\|_{L_p(\ell^c_\8)}=\|\{|x_n|^2\}_n\|^{\frac12}_{L_{\frac p2}(\ell_\8)}.$$
We refer to \cite{Jun2002}, \cite{JX2007} and \cite{Musat2003} for more information on these spaces and related facts.

The vector-valued Orlicz spaces $L_p\log^r L(\M; \ell_\8)$  $(1\le p< \8,\ r>0)$ are firstly introduced by Bekjan {\it et al} in \cite{BCO2013}. The key observation is that ${L_p(\ell_\8)}$-norm has an equivalent formulation:
$$\|\{x_n\}_n\|_{L_p(\ell_\8)}=\inf \big\{\frac12(\|a\|^2_{2p}+\|b\|^2_{2p})\,
 \sup_{n\ge0}\|y_n\|_\8\big\},$$
where the infimum is taken over the same parameter as before. Given an Orlicz function $\Phi$, let $\{x_n\}$ be a sequence of operators in $L_{\Phi}(\M)$, we define
$$\tau\big(\Phi({\sup_n}^+ x_n)\big)=\inf \big\{\frac12\big(\tau(\Phi(|a|^2))+\tau(\Phi(|b|^2))\big)\,
 \sup_{n\ge0}\|y_n\|_\8\big\},$$
where the infimum is taken over all the decompositions $x_n=ay_nb$ for $a,b\in L_0(\M)$ and $y_n\in L_\infty(\M)$ with $|a|^2,|b|^2\in L_\Phi(\M)$ and $\sup_n\|y_n\|_\infty\leq1$. Then $L_\Phi(\M;\ell_\infty)$ is defined to be the set of sequences $\{x_n\}_n\subset L_\Phi(\M)$ such that there exists one $\lambda>0$ satisfying $$\tau\big(\Phi({\sup_n}^+ \frac{x_n}\lambda)\big)<\infty$$
equipped with the norm
$$ \|\{x_n\}_n\|_{L_\Phi(\ell_\infty)} = \inf\left\{ \lambda>0: \tau\big(\Phi({\sup_n}^+ \frac{x_n}\lambda)\big)<1\right\}.$$
Then $(L_\Phi(\M;\ell_\infty),\|\cdot\|_{L_\Phi(\ell_\infty)})$ is a Banach space.
It was proved in \cite{BCO2013} that a similar characterization holds for sequences of positive operators:
$$\tau\big(\Phi({\sup_n}^+ x_n)\big) \thickapprox \inf\big\{\tau(\Phi(a))\;:\; a\in L^+_\Phi(\M)
 \;\mbox{s.t.}\; x_n\le a,\ \forall\;n\ge0\big\}$$
which implies a similar characterization for the norm
$$\|\{x_n\}_n\|_{L_\Phi(\ell_\infty)} \thickapprox \inf\big\{\|a\|_{\Phi}\;:\; a\in L^+_\Phi(\M)
 \;\mbox{s.t.}\; x_n\le a,\ \forall\;n\ge0\big\}.$$
From the definition, it is not difficult to check that whenever $(\M,\tau)$ is a probability space, we have
$$L_q(\M;\ell_\infty)\subset L_p\log^r L(\M;\ell_\infty)\subset L_s(\M;\ell_\infty)$$
for $q>p\geq s\geq1$. We refer the reader to \cite{BCO2013} for more information on the vector-valued Orlicz spaces.

\section{Maximal ergodic inequalities for multi-parameter $\mathbb{R}^+$ semigroups}

In this section, we are going to give two versions of maximal ergodic inequalities for $d$-parameter ($d\ge 2$) $\mathbb{R}^+$ semigroups according to different types of convergence.

We give the cube convergence version first which is directly a generalization of weak type $(1,1)$ inequality to multi-parameter and continuous case.

\begin{thm}\label{th:cubemaximalineq}
Let $(\M, \tau, \mathbb{R}^+, \sigma^i) \quad i=1,\dots,d$ be $d$ noncommutative dynamical systems. Set $$\sigma^i(s)= T^{(i)}_s  \quad i=1,\,\dots\,,d~\text{ and }~ M_{[t]}=\frac{1}{t^d}\int_0^{t}T^{(d)}_{s_d}\,ds_d
 \,\cdots \,\int_0^{t}T^{(1)}_{s_1}\,ds_1.$$ Then for every $x\in L_1(\M)$ and any $\lambda>0$, there exists $e\in P(\M)$ such that
 $$\tau(e^\bot)\le C \frac{\|x\|_1}{\lambda} \quad\text{and}\quad
 \sup_{t> 0}\|e\, M_{[t]}(x)\, e\|_\8 \le \lambda\,,$$ where $C$ is a positive constant independent of $x$ and $\lambda$.
\end{thm}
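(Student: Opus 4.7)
My proposed approach, following the philosophy announced in the introduction, is to reduce the continuous $d$-parameter cube weak-type $(1,1)$ inequality to the already-known discrete noncommutative cube maximal inequality from \cite{JX2007}. By the standard four-term decomposition $x=x_0-x_1+i(x_2-x_3)$ with $x_j\in L^+_1(\M)$ and $\|x_j\|_1\le\|x\|_1$, I may assume $x\in L^+_1(\M)$, so that $M_{[t]}(x)\ge 0$ for every $t>0$. Then, for any $y\ge 0$ and any $t$ with $2^{k-1}\le t\le 2^k$,
$$A^{(i)}_t(y)\;=\;\frac{1}{t}\int_0^t T^{(i)}_s(y)\,ds\;\le\;\frac{2}{2^k}\int_0^{2^k}T^{(i)}_s(y)\,ds\;=\;2\,A^{(i)}_{2^k}(y),$$
and iterating over $i=1,\dots,d$ (each $A^{(i)}_t$ being positivity-preserving) gives $M_{[t]}(x)\le 2^d\,M_{[2^k]}(x)$. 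Hence it is enough to control the countable dyadic maximal function $\sup_{k\in\mathbb{Z}}M_{[2^k]}(x)$.

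The next step replaces continuous averages by discrete ones. Because each $\sigma^i$ is a semigroup, the Riemann sum
$$\frac{1}{n}\sum_{j=0}^{n-1}T^{(i)}_{j\delta}\;=\;\frac{1}{n}\sum_{j=0}^{n-1}\bigl(T^{(i)}_\delta\bigr)^j$$
with $n\delta=2^k$ approximates $A^{(i)}_{2^k}$, and strong continuity of $\sigma^i$ makes the approximation compatible with the cone structure defining $L_1(\M;\ell_\infty)$. Applying Junge--Xu's continuous-to-discrete transference \cite{JX2007} coordinatewise, one controls, up to a universal constant, the dyadic continuous cube maximal function by the discrete cube maximal function
$$\sup_{n\ge 1}\;\frac{1}{n^d}\sum_{j_1,\dots,j_d=0}^{n-1}\bigl(T^{(d)}_\delta\bigr)^{j_d}\cdots\bigl(T^{(1)}_\delta\bigr)^{j_1}(x),$$
for a fixed, sufficiently small $\delta>0$.

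The final step is to invoke the discrete multi-parameter weak-type $(1,1)$ cube maximal inequality of \cite{JX2007} for the $d$ kernels $T^{(i)}_\delta$ on $(\M,\tau)$: this yields $e\in P(\M)$ with $\tau(e^\bot)\le C\|x\|_1/\lambda$ and the required uniform $L_\infty$ bound on the discrete averages. Pulling $e$ back through the transference and absorbing the factor $2^d$ from the dyadic comparison completes the proof, with $C$ depending only on $d$.

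The main difficulty is the transference: weak-type $(1,1)$ bounds do not iterate freely under composition in the noncommutative setting, since one lacks pointwise control, so one cannot simply stack $d$ one-parameter weak inequalities. Cube convergence is what makes the argument feasible, since the $d$ inner averages share the single parameter $t$; this synchronization allows the multi-parameter discrete inequality to be invoked as a single black box rather than iterated factor by factor.
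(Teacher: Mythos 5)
Your proof follows essentially the same route as the paper: reduce to positive $x$, approximate the continuous cube averages by Riemann sums using strong continuity, and invoke the discrete multi-parameter cube weak-type $(1,1)$ maximal inequality as a black box (your dyadic comparison $M_{[t]}(x)\le 2^d M_{[2^k]}(x)$ just makes the density-of-times step more explicit than the paper bothers to). The one correction: that discrete black box is not in \cite{JX2007}, whose multi-parameter maximal inequalities are confined to $p>1$ for exactly the reason you identify (weak type $(1,1)$ does not iterate); the paper takes it from Skalski \cite{Ska2005}, Theorem 1.2, which is the reference you actually need here.
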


\begin{proof}
Recall that the semigroup $(T^{(i)}_s)_{s\ge 0}\quad i=1,\dots,d$ is strongly continuous on $L_1(\M)$, i.e., for any $x\in L_1(\M)$ the function $s\mapsto T^{(i)}_s(x)$ is
continuous from $[0,\8)$ to $L_1(\M)$, and so is the function
$t\mapsto M_{[t]}(x)$. Thus to prove the inequality it
suffices to consider $M_{[t]}(x)$ for $t$ in a dense subset of
$(0,\8)$, for instance, the subset $\{n2^{-m}\;:\; m, n\in
\nat\}$. Using once more the strong continuity of
$(T^{(i)}_s)_{s\ge0}$, we can replace the integral defining $M_{[t]}(x)$ by
a Riemann sum. Thus we have approximately
 \be
 M_{[n2^{-m}]}(x)
 &=&\frac{1}{(n2^{-m})^d}\sum_{k_d=0}^{n-1}\int_{k_d2^{-m}}^{(k_d+1)2^{-m}}T^{(d)}_{s_d}(x)\,d s_d \,\cdots\,\sum_{k_1=0}^{n-1}\int_{k_12^{-m}}^{(k_1+1)2^{-m}}T^{(1)}_{s_1}(x)\,d s_1\\
 &\approx& \frac{1}{n^d}\sum_{k_d=0}^{n-1}T^{(d)}_{k_d2^{-m}}(x)\,\cdots\, \sum_{k_1=0}^{n-1}T^{(1)}_{k_12^{-m}}(x).
 \ee
As we have transformed our semigroup ergodic averages into the discrete case which is already proved in Theorem 1.2 \cite{Ska2005}, thus we conclude the wanted weak type $(1,1)$ inequality.
\end{proof}

Next we give the unrestricted version. It is on the other hand an extension to continuous case of the multi-parameter maximal ergodic inequalities on noncommutative Orlicz spaces recently proved in \cite{HS2016}. Though we make loose of the trace preserving condition, the proof in \cite{HS2016} still hold in our kernel case. The rest is to use the same transference as in Theorem \ref{th:cubemaximalineq}, so we omit the proof here.

\begin{thm}\label{th:unrestrictedmaximalineq}
Let $(\M, \tau, \mathbb{R}^+, \sigma^i) \quad i=1,\,\dots\,,d$ be $d$ noncommutative dynamical systems. Set $$\sigma^i(s)= T^{(i)}_s \quad i=1,\,\dots\,,d~\text{ and }~ M_{t_1,...\,, t_d}=\frac{1}{t_1\,\cdots\, t_d}\,
 \int_0^{t_d}T^{(d)}_{s_d}\,ds_d
 \,\cdots \,\int_0^{t_1}T^{(1)}_{s_1}\,ds_1.$$ Then for every $x\in L_1\log^{2(d-1)} L(\mathcal{M})$ and any $\lambda>0$, there exists $e\in P(\mathcal{M})$ such that
$$\tau(e^{\bot}) \le
 C \frac{\| x \|_{L_1\log^{2(d-1)}L}}{\lambda} ~ \text{ and } ~ \sup_{t_1>0,\dots\,,t_d>0}\|e\, M_{t_1,\dots\,,t_d}(x)\, e\|_{\8} \le  \lambda\,,$$ where $C$ is a positive constant independent of $x$ and $\lambda$;
moreover, for $x\in L_2\log^{2(d-1)} L(\mathcal{M})$ we have the following estimates $$\tau(e^{\bot}) \le
 \left(C \frac{\| x \|_{L_2\log^{2(d-1)}L}}{\lambda}\right)^2 ~ \text{ and } ~ \sup_{t_1>0,\dots\,,t_d>0}\| M_{t_1,\dots\,,t_d}(x)\, e\|_{\8} \le \lambda.$$
\end{thm}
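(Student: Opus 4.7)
The plan is to reduce the continuous-time inequality to the discrete $d$-parameter maximal inequality on $L_1\log^{2(d-1)}L(\M)$ established in \cite{HS2016}, via the transference scheme used in the proof of Theorem \ref{th:cubemaximalineq}. Two ingredients are required: (i) the \cite{HS2016} result must hold for the kernel case rather than only trace-preserving automorphisms, and (ii) the continuous averages must be dyadically discretized so as to produce a single projection valid for every $(t_1,\dots,t_d)\in(0,\8)^d$.

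First I would exploit strong continuity of each $(T^{(i)}_s)_{s\ge 0}$ on $L_1(\M)$ to conclude that $(t_1,\dots,t_d)\mapsto M_{t_1,\dots,t_d}(x)$ is norm-continuous; combined with the lower semicontinuity of $\|\cdot\|_\8$ under $L_1$-convergence in the finite algebra $\M$, this lets me reduce the supremum to the countable dense dyadic subset $D=\{(n_1 2^{-m},\dots,n_d 2^{-m}): m,n_i\in\nat\}$. At each fixed scale $2^{-m}$, strong continuity also lets the integrals defining $M_{t_1,\dots,t_d}$ be replaced by Riemann sums exactly as in the proof of Theorem \ref{th:cubemaximalineq}, yielding
$$M_{n_1 2^{-m},\dots,n_d 2^{-m}}(x) \;\approx\; \frac{1}{n_1\cdots n_d}\sum_{k_d=0}^{n_d-1}\!\cdots\!\sum_{k_1=0}^{n_1-1}(T^{(1)}_{2^{-m}})^{k_1}\cdots(T^{(d)}_{2^{-m}})^{k_d}(x),$$
which is precisely the unrestricted discrete $d$-parameter ergodic average for the commuting kernels $T^{(i)}_{2^{-m}}$.

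To this discrete system I would apply the kernel version of the \cite{HS2016} Orlicz maximal inequality, which produces a projection $e_m \in P(\M)$ with $\tau(e_m^\bot)\le C\|x\|_{L_1\log^{2(d-1)}L}/\lambda$ and uniform control over all integer tuples $(n_1,\dots,n_d)$, crucially with $C$ independent of $m$. Since $D$ is the countable union of the dyadic grids $\{(n_1 2^{-m},\dots,n_d 2^{-m}): n_i\in\nat\}$, I would then package the discrete bounds into a single inequality in the vector-valued Orlicz space $L_1\log^{2(d-1)}L(\M;\ell_\infty)$ using the equivalent formulations of the maximal norm recalled in Section \ref{preli}, and translate back via the standard weak-type equivalence to obtain one projection $e$ controlling all of $D$---and hence, by Step 1, all of $(0,\8)^d$. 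The $L_2\log^{2(d-1)}L(\M)$ bound is produced identically from the one-sided discrete variant.

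The principal obstacle is this assembly step: unlike the cube case of Theorem \ref{th:cubemaximalineq}, where only one parameter moves and a diagonal transference is enough, here $d$ independent parameters vary over arbitrarily fine dyadic scales and no weak-$(1,1)$ multi-parameter noncommutative inequality is available, which is precisely why the Orlicz space $L_1\log^{2(d-1)}L$ is forced on us and why the constant in the \cite{HS2016} inequality must be uniform across $m$. Verifying that the \cite{HS2016} proof extends to kernels, using only positive contractivity and $\tau\circ T\le\tau$, is the other conceptually nontrivial input.
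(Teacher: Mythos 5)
Your proposal follows exactly the route the paper takes: the paper's entire proof of this theorem consists of invoking the discrete $d$-parameter Orlicz-space maximal inequality of \cite{HS2016}, observing that its proof survives the passage from trace-preserving maps to kernels, and then applying the same strong-continuity/dyadic Riemann-sum transference used for Theorem \ref{th:cubemaximalineq}. Your write-up is in fact more explicit than the paper (which omits the proof), particularly about the assembly of the dyadic scales into a single projection, but the underlying argument is the same.
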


\section{Individual ergodic theorems}

In this section, we are going to treat different types of almost uniform convergence of ergodic averages for connected amenable group actions. As a preparation, we first give the following lemma which is a direct conclusion from last section.

\begin{lem}\label{le:semigroupindividualergodic}
Let $(\M, \tau, \mathbb{R}^+, \sigma^i) \quad i=1,\,\dots\,,d$ be $d$ noncommutative dynamical systems.
\begin{enumerate}[{\rm i)}]
\item For every $x\in L_1(\M)$, $M_{[t]}(x)$ converges b.a.u. to $F_{\sigma^d}\,\cdots\, F_{\sigma^1}(x)$ as $t$ tends to $\8$.
\item For every $x\in L_1\log^{2(d-1)} L(\mathcal{M})$, $M_{t_1,\dots\,,t_d}(x)$ converges b.a.u. to $F_{\sigma^d}\,\cdots\, F_{\sigma^1}(x)$ as $t_1, \dots\,, t_d$ tend to $\8$ arbitrarily. Moreover, if $x\in L_2\log^{2(d-1)} L(\mathcal{M})$, then we have the a.u. convergence.
\end{enumerate}
\end{lem}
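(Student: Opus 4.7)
The plan is to deduce both parts from the maximal ergodic inequalities of Section~3 via the standard noncommutative Banach-principle argument: once a maximal inequality is available together with norm-convergence on a dense subspace, one can upgrade to b.a.u.\ (or a.u.) convergence on the whole space. The proof thus reduces to (a) exhibiting a dense subspace $D$ of $L_1(\M)$ (resp.\ $L_1\log^{2(d-1)}L(\M)$ and $L_2\log^{2(d-1)}L(\M)$) on which the convergence can be read off directly, and (b) a careful $2^{-k}$-truncation argument using Theorems~\ref{th:cubemaximalineq} and \ref{th:unrestrictedmaximalineq}.

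For (a), I would take $D$ to be the linear span of common invariant elements together with ``partial coboundaries'' of the form $(\mathrm{Id} - T^{(i)}_s)y$ for $i\in\{1,\dots,d\}$, $s>0$, and $y$ in the relevant space. On an invariant element, $M_{[t]}(x)=x=F_{\sigma^d}\cdots F_{\sigma^1}(x)$ holds trivially, and the same for the unrestricted average. On a coboundary $x=(\mathrm{Id}-T^{(i)}_s)y$, the telescoping identity
\[
\tfrac{1}{t_i}\int_0^{t_i}T^{(i)}_u(\mathrm{Id}-T^{(i)}_s)y\,du = \tfrac{1}{t_i}\Bigl(\int_0^s T^{(i)}_u y\,du - \int_{t_i}^{t_i+s}T^{(i)}_u y\,du\Bigr)
\]
has norm at most $2s\|y\|_1/t_i$, and the contractive averages in the remaining directions do not enlarge this bound, so $M_{t_1,\dots,t_d}(x)\to 0=F_{\sigma^d}\cdots F_{\sigma^1}(x)$ in norm. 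Density of $D$ follows from the one-parameter mean ergodic theorem for each kernel semigroup $T^{(i)}_s$, which makes the sum of fixed points and $(\mathrm{Id}-T^{(i)}_s)$-images dense in each direction; iterating over $i=1,\dots,d$ yields the required density in $L_1(\M)$ and in the Orlicz spaces.

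For (b), given $x$ and $\varepsilon>0$, I choose for each $k\ge 1$ an element $y_k\in D$ so close to $x$ that applying the maximal inequality to $x-y_k$ with $\lambda=2^{-k}$ produces a projection $p_k$ with $\tau(p_k^\perp)\le \varepsilon 2^{-k-1}$ and $\sup\|p_k M_\cdot(x-y_k)p_k\|_\infty\le 2^{-k}$; by the convergence on $D$ one obtains a further projection $q_k$ with $\tau(q_k^\perp)\le \varepsilon 2^{-k-1}$ along which the averages of $y_k$ converge in operator norm to $F_{\sigma^d}\cdots F_{\sigma^1}(y_k)$. Setting $e=\bigwedge_k(p_k\wedge q_k)$ gives $\tau(e^\perp)\le\varepsilon$, and the triangle inequality with $\pm y_k$ inserted yields $\limsup_t\|e(M_\cdot(x)-F_{\sigma^d}\cdots F_{\sigma^1}(x))e\|_\infty \le 2\cdot 2^{-k}+\|eF_{\sigma^d}\cdots F_{\sigma^1}(x-y_k)e\|_\infty$ for every $k$. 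The a.u.\ assertion in the $L_2\log^{2(d-1)}L$ case runs identically, using the one-sided estimate in Theorem~\ref{th:unrestrictedmaximalineq}.

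The main obstacle I anticipate is controlling the residual term $\|eF_{\sigma^d}\cdots F_{\sigma^1}(x-y_k)e\|_\infty$ just mentioned: $F_{\sigma^d}\cdots F_{\sigma^1}(x-y_k)$ lies a priori only in $L_1$ or in the Orlicz space, so it is not directly controlled in operator norm. The way around this is to identify it as the norm limit of $M_\cdot(x-y_k)$ via the noncommutative mean ergodic theorem in the same space, so that the bound $2^{-k}$ supplied by the maximal inequality applied to $x-y_k$ passes to the limit and closes the estimate.
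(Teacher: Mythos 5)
Your overall skeleton is the same as the paper's: a maximal ergodic inequality from Section~3, convergence on a dense subspace, and a $2^{-k}$-exhaustion of projections. The difference is in the dense subspace. The paper takes $D=L_\8(\M)$ and simply \emph{quotes} the b.a.u.\ convergence of $M_{t_1,\dots,t_d}(y)$ for $y\in L_\8(\M)$ from \cite{CN1978} and \cite{JX2007}, then deduces i) from ii) using the density of $L_1\log^{2(d-1)}L(\M)$ in $L_1(\M)$ together with Theorem~\ref{th:cubemaximalineq}; you instead build $D$ from invariant elements and partial coboundaries $(\mathrm{Id}-T^{(i)}_s)y$ and prove the convergence on $D$ by hand via telescoping. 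That is a legitimate and more self-contained route, and your resolution of the residual term $\|eF_{\sigma^d}\cdots F_{\sigma^1}(x-y_k)e\|_\8$ --- identifying $F_{\sigma^d}\cdots F_{\sigma^1}(x-y_k)$ as the limit of $M_\cdot(x-y_k)$ so that the bound from the maximal inequality passes to the limit --- is in fact more careful than the paper's corresponding step, which merely asserts that $y$ can be chosen with $\|F_{\sigma^d}\cdots F_{\sigma^1}(x-y)\|_\8\le\lambda$.

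There is, however, one step in your write-up that does not work as stated. Your telescoping estimate bounds the averaged coboundary by $2s\|y\|_1/t_i$, i.e.\ you obtain convergence to $0$ in $L_1$-norm on $D$. But in step (b) you then claim that ``by the convergence on $D$ one obtains a further projection $q_k$ \dots along which the averages of $y_k$ converge in operator norm.'' Norm convergence in $L_1$ (or in the Orlicz space) does not produce such projections; this is exactly the gap between mean and individual ergodic theorems, and it is the reason the paper insists on a dense class on which \emph{b.a.u.}\ convergence is already known. The fix is easy: take the coboundaries with $y\in\M$, so that the telescoping identity gives $\bigl\|\frac1{t_i}\int_0^{t_i}T^{(i)}_u(\mathrm{Id}-T^{(i)}_s)y\,du\bigr\|_\8\le 2s\|y\|_\8/t_i\to0$ in \emph{operator} norm (the remaining averaging directions being $L_\8$-contractions), whence no projection $q_k$ is even needed for the $y_k$-term. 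Density of this smaller class is unaffected since $\M$ is dense in $L_1(\M)$ and in the Orlicz spaces and $(\mathrm{Id}-T^{(i)}_s)$ is bounded there. With that correction, and granting the mean ergodic theorem in $L_1(\M)$ for kernels (available here by approximating from $L_2(\M)$ under the finite trace), your argument goes through.
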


\begin{proof}
We first prove ii). It is already known that for any $y\in L_\8(\M)$, $M_{t_1,\dots\,,t_d}(x)$ converges b.a.u. to $F_{\sigma^d}\,\cdots\, F_{\sigma^1}(x)$ (c.f. \cite{CN1978}, \cite{JX2007}), and $L_\8(\M)$ is dense in $L_1\log^{2(d-1)} L(\mathcal{M})$. Thus taking any $x\in L_1\log^{2(d-1)} L(\mathcal{M})$, any $\ep>0$ and any $\lambda>0,$ we can always find a $y \in L_\8(\M)$, where $\|x-y\|_{L_1\log^{2(d-1)}L(\M)} \le \frac{1}{2C}\lambda \ep$, $\|F_{\sigma^d}\,\cdots\, F_{\sigma^1}(x-y)\|_\8 \le \lambda$, and $C$ is the positive constant from the application of the first maximal inequality in Theorem \ref{th:unrestrictedmaximalineq} to the element $x-y$: there exists a projection $e_1\in P(\mathcal{M})$ such that $$\tau(e_1^{\bot}) \le
 C \frac{\| x-y \|_{L_1\log^{2(d-1)}L(\M)}}{\lambda}  \le  \frac{\ep}{ 2} ~ \text{ and } ~ \sup_{t_1>0,\dots\,,t_d>0}\|e_1\, M_{t_1,\dots\,,t_d}(x-y)\, e_1\|_{\8} \le  \lambda\,.$$

On the other hand, there exists a projection $e_2$, such that $\tau(e_2^{\bot})\le \ep \slash 2$ and $ \|e_2 \, (M_{t_1,\dots\,,t_d}(y)-F_{\sigma^d}\,\cdots\, F_{\sigma^1}(y))\, e_2 \|_\8 $ converges to $0$, which means that there exists $ r_1,\,\dots \,,r_d \in \mathbb{R}^+$, when $t_i\ge r_i,~i=1,\,\dots\,,d $, we have $$ \| e_2 M_{t_1,\dots\,,t_d}(y) e_2 - e_2\, F_{\sigma^d}\,\cdots\, F_{\sigma^1}(y)\, e_2 \|_{\8} < \lambda.$$

Now take $e = e_1\wedge e_2$, then we have $\tau(e^{\bot})\le \ep$ and
\be
\| e\, M_{t_1,\dots\,,t_d}(x) \, e - e\, F_{\sigma^d}\,\cdots\, F_{\sigma^1}(x)\, e \|_{\8}
\ee
\be\begin{split}
\le & \| e\, M_{t_1,\dots\,,t_d}(x-y)\, e \|_{\8} +\| e\, M_{t_1,\dots\,,t_d}(y)\, e - e\, F_{\sigma^d}\,\cdots\, F_{\sigma^1}(y)\, e \|_{\8} + \|e\, F_{\sigma^d}\,\cdots\, F_{\sigma^1}(y-x)\, e\|_{\8}\\
< & 3\lambda.
\end{split}\ee
So we have proved $M_{t_1,\dots\,,t_d}(x)$ converges b.a.u. to $F_{\sigma^d}\,\cdots\, F_{\sigma^1}(x)$ for every $x\in L_1\log^{2(d-1)} L(\mathcal{M})$, and using the same process with the second inequality in Theorem \ref{th:unrestrictedmaximalineq}, we have all the conclusions of ii).

As $L_1\log^{2(d-1)} L(\mathcal{M})$ is dense in $L_1(\M)$, and the cube convergence is just a special case of unrestricted convergence, then apply the maximal inequality in Theorem \ref{th:cubemaximalineq} and repeat the reasoning as above, it follows the result of i). Thus we complete the proof.
\end{proof}

We are now in position to give our main result by using a structure theorem of connected amenable locally compact group of Emerson and Greenleaf \cite{EG1974}.

\begin{thm}\label{th:pointwisegodic}
Given $(\M,\tau, G,\sigma)$ a noncommutative dynamical system with $G$ a connected amenable locally compact group, there exists $\mathcal{V}$ and $\mathcal{W}$ that are classes of increasing sequences of measurable subsets of $G$, $\mathcal{W}$ is strictly included in $\mathcal{V}$ such that:
\begin{enumerate}[{\rm i)}]
\item for each $(V_n)_{n\ge 1}\in \mathcal{V}$, $\displaystyle \mathop{\bigcup}_{n \ge 1} V_n $ generates $G$ and any $x\in L_1\log^{2(d-1)} L(\mathcal{M})$ (resp. $x\in L_2\log^{2(d-1)} L(\mathcal{M})$), we have
$$ \frac{1}{|V_n|} \int_{V_n} T_g(x) dg ~\text{converges to}~F_\sigma(x)~\text{b.a.u. (resp. a.u.)}~\text{when}~n\to \8;$$
\item for each $(W_n)_{n\ge 1}\in \mathcal{W}$ and any $x\in L_1(\M)$, we have
$$ \frac{1}{|W_n|} \int_{W_n} T_g(x) dg ~\text{converges to}~F_\sigma(x)~\text{b.a.u.}~\text{when}~n\to \8.$$

\end{enumerate}

\end{thm}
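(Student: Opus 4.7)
The plan is to invoke the Emerson--Greenleaf structure theorem for connected amenable locally compact groups, which provides a compact normal subgroup $K$ such that $G/K$ is a connected amenable Lie group, and inside the latter one can find a relatively compact symmetric neighborhood $U$ of the identity together with one-parameter subgroups $\{\exp(sX_i)\}_{s\in\mathbb{R}}$, $i=1,\dots,d$, such that sets of the form
\[ V_n = U \cdot \exp(t_1^{(n)}X_1)\cdots\exp(t_d^{(n)}X_d) \]
are increasing, their union generates $G$, and their Haar measures factor, up to an asymptotically negligible Jacobian, as a product measure. Define $\mathcal{V}$ to be the class of such sequences in which $t_1^{(n)},\dots,t_d^{(n)}$ tend to $\infty$ arbitrarily, and $\mathcal{W}$ the strict subclass in which $t_1^{(n)}=\cdots=t_d^{(n)}\to\infty$; so $\mathcal{W}\subsetneq\mathcal{V}$ automatically.

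First I would reduce the $G$-averages to iterated $\mathbb{R}^+$-averages on $\M$. Writing $A_U(x)=\frac{1}{|U|}\int_U T_u(x)\,du$ and using Fubini together with the measure factorization, one obtains, up to a multiplicative error $1+o(1)$ as $n\to\infty$,
\[ \frac{1}{|V_n|}\int_{V_n} T_g(x)\,dg \;\approx\; M^{\sigma}_{t_1^{(n)},\dots,t_d^{(n)}}\bigl(A_U(x)\bigr), \]
where $M^\sigma_{t_1,\dots,t_d}$ denotes the multi-parameter average built from the kernels $\sigma^i(s)=T_{\exp(sX_i)}$ as in Theorem \ref{th:unrestrictedmaximalineq}. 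Since $U$ is relatively compact and each $T_u$ is a kernel, $A_U$ is itself a kernel and hence preserves $L_1(\M)$, $L_1\log^{2(d-1)}L(\M)$ and $L_2\log^{2(d-1)}L(\M)$; moreover the strong continuity of $\sigma$ guarantees that the one-parameter groups $\sigma^1,\dots,\sigma^d$ fall under the hypotheses of Lemma \ref{le:semigroupindividualergodic}.

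Part ii) now follows by applying item i) of Lemma \ref{le:semigroupindividualergodic} (cube convergence) to $A_U(x)\in L_1(\M)$ whenever the parameters are coupled as in $\mathcal{W}$, while part i) follows by applying item ii) of the same lemma (unrestricted convergence) to $A_U(x)\in L_1\log^{2(d-1)}L(\M)$ or $L_2\log^{2(d-1)}L(\M)$ for sequences in $\mathcal{V}$, yielding b.a.u.\ convergence in the first case and a.u.\ convergence in the second. In each case the limit is $F_{\sigma^d}\cdots F_{\sigma^1}A_U(x)$, and it remains to identify this with $F_\sigma(x)$: any $y$ fixed by $A_U$ and by all the $\sigma^i$ is fixed by every element of $U\cdot\exp(\mathbb{R}X_1)\cdots\exp(\mathbb{R}X_d)$, hence by the whole group $G$ since $\bigcup_n V_n$ generates $G$; conversely $F_\sigma(x)$ is manifestly fixed by all these operators. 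As both $F_\sigma$ and $F_{\sigma^d}\cdots F_{\sigma^1}A_U$ are the $\tau$-preserving conditional expectations onto the same fixed-point subalgebra, they coincide.

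The main obstacle is the first reduction: the parameterization $V_n = U\cdot\exp(t_1 X_1)\cdots\exp(t_d X_d)$ is topological, not a group direct product, so its Jacobian is not literally constant and the operators $T_{\exp(sX_i)}$ need not commute. One must check that both the Jacobian discrepancy and the commutator errors are swallowed by the F\o lner ratio $|V_n\triangle gV_n|/|V_n|\to 0$ and therefore contribute only a $1+o(1)$ factor in norm in every space under consideration. This is precisely the technical heart of the Emerson--Greenleaf construction in \cite{EG1974}; once transcribed to the noncommutative setting as in \cite{CN1978}, the argument above goes through verbatim.
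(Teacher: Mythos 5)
Your overall strategy --- reduce to the Lie quotient via Emerson--Greenleaf, factor the averages into a compact piece composed with a $d$-fold iterated $\mathbb{R}$-average, feed the latter into Lemma \ref{le:semigroupindividualergodic}, and take $\mathcal{W}$ to be the equal-parameter subclass of $\mathcal{V}$ --- is the same as the paper's. But two of your key steps do not survive in the form you give them. First, you misstate the structure theorem: Lemma 4.2 of \cite{EG1974}, as used in the paper, produces a compact \emph{connected subgroup} $K$ (not a relatively compact symmetric neighborhood $U$ of the identity) and closed one-parameter subgroups $L_1,\dots,L_d$ such that $j:K\times L_d\times\cdots\times L_1\to G'$ is a homeomorphism identifying the \emph{product of the Haar measures exactly} with a left Haar measure on $G'$, and such that each $H_i=j(L_i\times\cdots\times L_1)$ is a closed subgroup normal in $H_{i+1}$. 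Consequently the factorization $T_{g'}=\tilde{\sigma}(k)\,T^{(d)}_{s_d}\cdots T^{(1)}_{s_1}$ and the identity expressing the $V'$-average as $\int_K dk\,\tilde{\sigma}(k)\,M_{n_d,\dots,n_1}(x)$ are exact; there is no Jacobian discrepancy and no commutator error to be ``swallowed by the F{\o}lner ratio.'' What you call the technical heart of the argument is already resolved by the precise statement of the structure theorem, and your $1+o(1)$ bookkeeping is both unnecessary and, as presented, unproved.

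Second, your identification of the limit breaks precisely because you replaced $K$ by $U$. Averaging over the compact subgroup $K$ against its normalized Haar measure gives exactly the projection $\tilde{F}$ onto the $K$-fixed points, whereas your $A_U$ is merely a kernel: $A_U(y)=y$ does not obviously force $T_u(y)=y$ for all $u\in U$, and $F_{\sigma^d}\cdots F_{\sigma^1}A_U$ is not obviously a conditional expectation (compositions of conditional expectations onto different subalgebras generally are not one). The paper's identification $\tilde{F}F_d\cdots F_1=F_\sigma$ rests on the normality chain $H_1\triangleleft H_2\triangleleft\cdots\triangleleft G'$: for $h_1\in H_1$ and $h\in H_2$ one writes $h_1h=hh_1'$ with $h_1'\in H_1$, deduces $T_{h_1}T_hF_1(x)=T_hF_1(x)$, hence that $F_2F_1(x)$ is $H_1$-invariant, and climbs the chain. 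Your construction never records this normality, so you cannot conclude that the iterated limit lies in the $G$-invariant subspace. Finally, note that the paper runs the compact reduction twice --- once for the largest compact normal subgroup $K(G)$, pulling the sets back through $u^{-1}$, and once for the compact connected subgroup $K$ inside the Lie quotient $G'$ --- and you have collapsed these two distinct steps into a single neighborhood $U$.
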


\begin{proof}
In any connected amenable locally compact group $G$, there is a largest compact normal subgroup K(G). The quotient $G'=G/K(G)$ is an amenable Lie group with $K(G')$ trivial.

\

a) We first treat the case of amenable Lie group $G'$ with $K(G')$ trivial.

Following Lemma 4.2 from \cite{EG1974} by Emerson and Greenleaf: there are closed one-parameter subgroups $L_1,\dots,L_d(L_i \simeq \mathbb{R})$ and a compact connected subgroup $K$ such that,

\begin{enumerate}[{\rm 1)}]

\item The map $j:K \times L_d \times \cdots \times L_1 \to G'$ is a homeomorphism;

\item The subsets $H_i= j(L_i \times \cdots \times L_1)$ are closed subgroups in $G'$, $1 \leq i \leq d$;

\item $H_i$ is normal in $H_{i+1}$ for $i=1,2,\dots,d$ (we take $H_{d+1}=G'$);

\item The map $j$ identifies the product Haar measures $m_K \times m_d \times \cdots \times m_1$ on $K \times L_d \times \cdots \times L_1$ with a left Haar measure $dg'$ on $G'$.

\end{enumerate}

Let $\sigma^i$ be the restriction of $\sigma$ to $L_i ~(i=1,\dots, d)$, and $\tilde{\sigma}$ the restriction to $K$. Therefore each subgroup forms a dynamical system, and for simplicity we denote by $F_i$ and $\tilde{F}$ the corresponding projections from $\mathcal{B}(\mathcal{M})$ onto the fixed point subspaces with respect to every subgroup actions. Here $\mathcal{B}(\mathcal{M})$ denotes some noncommutative function space constructed from $\M$. Without saying in particular, we usually identify group $G'$ with its parameterized form $K \times L_d \times \cdots \times L_1$.

Let $V'(t_d,\dots,t_1)=K \times [-t_d,t_d]\times \dots \times [-t_1,t_1], ~t_d,\dots,t_1>0$, in the following, our subsets sequence have this kind of form.

Choosing $\mathcal{B}(\mathcal{M})$ and the sequence $t_i(n_i) \xrightarrow[n_i \to \8]{} \8$, $i= 1,2, \dots, d$ properly, for any $x\in \mathcal{B}(\mathcal{M})$ we can achieve the following convergence

\beq\label{eq:convergence}
\frac{1}{|V'(t_d(n_d), \dots, t_1(n_1))|} \displaystyle \mathop{\int}_{V'(t_d(n_d), \dots, t_1(n_1))} T_{g'}(x) d g' \longrightarrow \tilde{F} F_d \cdots F_1(x)
\eeq
in different senses, and correspondingly we get the sequence classes $\mathcal{V}$ and $\mathcal{W}$ from the construction.

By the former decomposition properties we can easily deduce that $T_{g'}=\sigma(g')=\sigma(k,s_d,\dots, s_1)=\tilde{\sigma}(k)\, T^{(d)}_{s_d} \,\cdots\, T^{(1)}_{s_1} ,$ and we note

\be
M_{n_d,\dots\,, n_1}(x)=\frac{1}{2^d \,t_1 \cdots \, t_d} \displaystyle \mathop{\int}_{-t_d(n_d)}^{t_d(n_d)} \,d s_d\, \cdots \, \displaystyle \mathop{\int}_{-t_1(n_1)}^{t_1(n_1)} d s_1  T^{(d)}_{s_d}\, \cdots \, T^{(1)}_{s_1} (x)
\ee

\be
\tilde{M}_{n_d,\dots\,, n_1}(x)= \displaystyle \mathop{\int}_K \,dk \, \tilde{\sigma}(k) \,M_{n_d,\dots\,, n_1}(x)
\ee

Typically we consider

\be
&   & \Big \| \Big( \displaystyle \mathop{\int}_K \,\tilde{\sigma}(k)\, M_{n_d,\dots\,, n_1}(x)\, d k - \tilde{F} F_d \cdots F_1(x) \Big) e \Big \|_\8
\\
& = & \Big \|  \Big( \displaystyle \mathop{\int}_K\, dk\, \tilde{\sigma}(k) \, \big[ M_{n_d,\dots\,, n_1}(x) - F_d \cdots F_1(x) \big] \Big) e \Big \|_\8
\\
& \leq & \displaystyle \mathop{\int}_K \,dk \Big \| \Big( M_{n_d,\dots\,, n_1}(x) - F_d \cdots F_1(x) \Big) e \Big \|_\8 ,
\ee and $e$ denotes any projection in $P(\mathcal{M})$. So we know that the convergence of $\tilde{M}_{n_d,\dots\,, n_1}(x)$ is dominated by
$M_{n_d,\dots\,, n_1}(x)$.

Here is when we apply Lemma \ref{le:semigroupindividualergodic} to obtain the b.a.u. (resp. a.u.) convergence of (\ref{eq:convergence}) for every $x\in L_1\log^{2(d-1)} L(\mathcal{M})$ (resp. $x\in L_2\log^{2(d-1)} L(\mathcal{M})$) as $t_i(n_i) \xrightarrow[n_i \to \8]{} \8$, $i= 1,2,\, \dots \,, d$ arbitrarily;
in particular, if we let the sequence $t_1(n_1)=\,\cdots\,=t_d(n_d) \rightarrow \8$, then we have the b.a.u. convergence of (\ref{eq:convergence}) for every $x \in L_1(\M)$.

It remains to prove $ \tilde{F} F_d \cdots F_1(x) = F_\sigma(x) $. However, take any $h_1 \in H_1$ and $h\in H_2$. From the normality of $H_1$ in $H_2$, we know there exists $h'_1\in H_1$, such that $h_1 h = h h'_1$. Then we have $T_{h_1} T_{h}F_1(x)= T_{h}T_{h'_1}F_1(x)=T_{h}F_1(x)$. As $F_2F_1(x)$ is a limit(in strong operator topology) of combinations of $T_hF_1(x)$, so $F_2F_1(x)$ is invariant of $H_1$-actions. Similarly, we can prove that $ \tilde{F} F_d \cdots F_1(x)$ is invariant under each $L_1,\, \dots\, ,L_d,K$ subgroup actions, thus invariant of $G'$, so we finish this part of proof.

b) Now we return to the general case of $G$. Let $u$ be the canonical surjection of $G$ onto $G'=G/K(G)$, and define
$$V(t_d,\,\dots\,,t_1)=u^{-1}\big( V'(t_d,\,\dots\,,t_1)\big).$$

We also define $$\mathcal{V}=\{(V_n=V(t_d(n),\,\dots\,,t_1(n)))_{n\ge 1}: t_i(n)\rightarrow +\8 \text{ as }n \rightarrow +\8 ,\quad i=1,\,\dots\,,d\}$$
and $$ \mathcal{W}= \{(W_n)_{n\ge 1}\in \mathcal{V}: W_n=W(t_d(n),\,\dots\,,t_1(n)) \text{ satisfies } t_d(n)= \, \cdots \,= t_1(n) \text{ for each } n\}.$$

In the following, we can also write $K$ in short for $K(G)$ without confusion to part a) (since K is also compact subgroup) and let $dk$ be the normalized Haar measure on $K$, $dg$ be the left Haar measure on $G$ such that
\be
\displaystyle \mathop{\int}_G f(g)\, d g = \displaystyle \mathop{\int}_{G'} \,d g' \displaystyle \mathop{\int}_{K}\,f(g' k)\,dk, ~\quad~ \forall f \in C_0(G).
\ee

We have then
\be
|V(t_d,\,\dots\,,t_1)|=| V'(t_d,\,\dots\,,t_1)|.
\ee

Like before, for $K$ and $G'$, there are corresponding subgroup action systems as $(\mathcal{M},\tau, K, \tilde{\sigma})$ and $(\M,\tau, G', \sigma')$, also we denote $\tilde{F}$ and $F'$ as the corresponding projections onto each fixed point subspace.
Since $K$ is compact, we similarly consider the subgroup action, note that $\tilde{F}(x) = \displaystyle \mathop{\int}_{K}\, \tilde{\sigma}_{k}(x)\,dk$ and we have

\be
\frac{1}{|V(t_d, \,\dots\,, t_1)|} \displaystyle \mathop{\int}_{V(t_d, \,\dots\,, t_1)} T_g(x)\, d g = \frac{1}{|V'(t_d,\, \dots\,, t_1)|} \displaystyle \mathop{\int}_{V'(t_d,\, \dots\,, t_1)} \,d g' \sigma'_{g'}\, \tilde{F}(x).
\ee

Then by the result in part a), we get the corresponding convergence results of the above equation with respect to sequences in $\mathcal{V}$ and $\mathcal{W}$, and this completes the proof.

\end{proof}

\end{document}